\newtheorem{mthm}{Theorem}
\newtheorem*{mcor}{Corollary}
\newtheorem{thm}{Theorem}[section]
\newtheorem{prop}[thm]{Proposition}
\newtheorem{lem}[thm]{Lemma}
\theoremstyle{definition}
\theoremstyle{remark}
\newtheorem{remark}[thm]{Remark}
\numberwithin{equation}{section}
\newcommand{\RR}{\mathbbm{R}}
\newcommand{\DD}{\mathfrak{D}}
\newcommand{\group}{\mathsf}
\newcommand{\Iso}{\group{Iso}}
\newcommand{\Aff}{\group{Aff}}
\newcommand{\OO}{\group{O}}
\newcommand{\Hol}{\group{Hol}}
\newcommand{\Zen}{\mathrm{C}}
\DeclareMathOperator{\im}{\mathrm{im}}
\DeclareMathOperator{\rk}{\mathrm{rk}}
\begin{document}

\title[Flat Homogeneous Spaces of Signature $(m,2)$]{A Supplement to the Classification of Flat Homogeneous Spaces of Signature $(m,2)$}

\author[Globke]{Wolfgang Globke}

\address{Wolfgang Globke,
School of Mathematical Sciences\\
The University of Adelaide\\
SA 5005\\
Australia}
\email{wolfgang.globke@adelaide.edu.au}

\thanks{The author was supported by the Australian Research Council grant DP120104582.}

\begin{abstract}
Duncan and Ihrig (1993) gave a classification of the
flat homo\-geneous spaces of metric
signature $(m,2)$, provided that a certain condition on
the development image of these spaces holds.
In this note we show that this condition can be
dropped, so that Duncan and Ihrig's classification is in fact
the full classification for signature $(m,2)$.
\end{abstract}

\maketitle

\section{Introduction}

In a series of three articles,
Duncan and Ihrig \cite{DI1, DI2, DI3}
developed a theory for
flat pseudo-Riemannian homogeneous spaces.
For geodesically complete manifolds, this theory was
previously developed by Wolf \cite{wolf1}, and his methods
were extended by Duncan and Ihrig to the geodesically
incomplete case.

In their first article \cite{DI1}, they classified the
Lorentzian flat homogeneous spaces.
In the second article \cite{DI2}, they studied isometry groups
of pseudo-Riemannian flat homogeneous spaces.
Underlying their investigations were the following facts
(see Goldman and Hirsch \cite{GH}):
Let $M$ be a flat homogeneous pseudo-Riemannian manifold
of metric signature $(n-s,s)$.
If $M$ is geodesically complete, then $M=\RR^n_s/\Gamma$,
where $s$ is the index of a pseudo-scalar product $\langle\cdot,\cdot\rangle$
on $\RR^n$, and $\Gamma\subset\Iso(\RR^n_s)$ is the
fundamental group of $M$.
If $M$ is not geodesically complete, then its universal cover
$\tilde{M}$ can be mapped to an affine homogeneous domain
$\DD\subset\RR^n_s$ via the development map. We call $\DD$
the \emph{development image} of $\tilde{M}$.
The development map induces a homomorphism from the fundamental
group $\pi_1(M)$ onto a group $\Gamma\subset\Iso(\DD)$,
which we will call the \emph{affine holonomy group} of $M$.
Then the manifold $M$ can be realised as $M=\DD/\Gamma$.

In their third article \cite{DI3}, Duncan and Ihrig classified
those flat homogeneous spaces of signature $(n-2,2)$
whose associated domain $\DD$ is \emph{translationally
isotropic}.
This means that the set $T$ of all translations in $\RR^n_2$
leaving $\DD$ invariant contains its own orthogonal space:
$T^\perp\subset T$.
We will prove in this article that every development image
of a flat homogeneous space of signature $(n-2,2)$
is translationally isotropic.
As a consequence, the classification by Duncan and Ihrig
\cite[Chapter 3]{DI3} is already the full classification
of flat homogeneous spaces of  signature $(n-2,2)$.

\begin{remark}\label{rem_correction}
In their classification, Duncan and Ihrig
\cite[Theorem 3.1, case I]{DI3} misquote the geodesically complete case
from Wolf's article \cite{wolf1}.
They state that in this case the affine holonomy group $\Gamma$
is a discrete group of pure translations. However, it is only true
that $\Gamma$ is a free abelian group in this case, but
as Wolf's example in Section 6 of \cite{wolf1} shows,
the elements of $\Gamma$ can have non-trivial linear parts.
The classification for this case is given by Wolf \cite[Theorem 3.6]{wolf2}.
\end{remark}

Our main result is the following theorem:

\begin{mthm}\label{thm_abhol_transiso}
Let $M=\DD/\Gamma$ be a flat pseudo-Riemannian homogeneous manifold, where
$\DD\subseteq\RR^n_s$ is an open orbit of the centraliser of $\Gamma$ in
$\Iso(\RR^n_s)$.
If $M$ has abelian linear holonomy, then $\DD$ is a translationally isotropic 
domain.
\end{mthm}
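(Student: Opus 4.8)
The plan is to reduce \emph{translational isotropy} of $\DD$ to a coisotropy statement about the fixed space of the linear holonomy, and then to extract that coisotropy from the openness of the centraliser orbit. Throughout write $\ell\colon\Iso(\RR^n_s)\to\OO(n-s,s)$ for the linear part, $\gamma(x)=A_\gamma x+v_\gamma$ with $A_\gamma=\ell(\gamma)$, and set $N_\gamma:=A_\gamma-\id$ and $V_0:=\bigcap_{\gamma\in\Gamma}\ker N_\gamma$ (the fixed space of the linear holonomy). A direct computation with the affine group shows that a translation by $v$ commutes with $\gamma$ precisely when $A_\gamma v=v$; hence the pure translations lying in the centraliser of $\Gamma$ are exactly those by vectors of $V_0$. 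Since $\DD$ is an orbit of this centraliser it is invariant under them, so $V_0\subseteq T$ and therefore $T^\perp\subseteq V_0^\perp$. It thus suffices to prove the coisotropy $V_0^\perp\subseteq V_0$, for then $T^\perp\subseteq V_0^\perp\subseteq V_0\subseteq T$.

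Next I identify $V_0^\perp$. Because each $A_\gamma$ is an isometry, $\langle N_\gamma x,y\rangle+\langle x,N_\gamma y\rangle+\langle N_\gamma x,N_\gamma y\rangle=0$; choosing $y\in\ker N_\gamma$ gives $\im N_\gamma\subseteq(\ker N_\gamma)^\perp$, and since $\rk N_\gamma=n-\dim\ker N_\gamma=\dim(\ker N_\gamma)^\perp$ this is the equality $\im N_\gamma=(\ker N_\gamma)^\perp$. Taking orthogonals of the defining intersection then yields $V_0^\perp=\sum_{\gamma\in\Gamma}\im N_\gamma$. With this description, $V_0^\perp\subseteq V_0=(V_0^\perp)^\perp$ is equivalent to $V_0^\perp$ being totally isotropic, i.e. to $N_\delta N_\gamma=0$ for all $\gamma,\delta\in\Gamma$; specialising to $\gamma=\delta$ this already forces $N_\gamma^2=0$, so every $A_\gamma$ must turn out to be unipotent of index at most two. (One may first invoke the flat--homogeneous theory to assume the linear holonomy unipotent, which legitimises the Jordan-block language below.)

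The homogeneity hypothesis must enter here, and this is the main obstacle. Let $\frz$ denote the Lie algebra of the centraliser and fix $p\in\DD$. An element $(X,u)\in\frz$ satisfies $[X,A_\gamma]=0$ and $N_\gamma u=X v_\gamma$ for all $\gamma$; writing $q_\gamma:=\gamma(p)-p=N_\gamma p+v_\gamma$ for the displacement of $p$, these two relations combine to give $N_\gamma(Xp+u)=X q_\gamma$. Openness of the orbit $\DD=\mathrm{(centraliser)}\cdot p$ means the evaluation $(X,u)\mapsto Xp+u$ is onto $\RR^n$, and hence $\im N_\gamma=\ell(\frz)\,q_\gamma$. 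The target $V_0^\perp\subseteq V_0$ is therefore equivalent to the purely metric condition $\ell(\frz)q_\gamma\perp\ell(\frz)q_\delta$ for all $\gamma,\delta$, a statement about how the skew operators $\ell(\frz)\subseteq\sso(n-s,s)$ move the displacement vectors.

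The abelian hypothesis feeds in through commutators: since $[A_\gamma,A_\delta]=\id$, the commutator $[\gamma,\delta]$ is the pure translation by $N_\gamma v_\delta-N_\delta v_\gamma\in V_0^\perp$, and such translations lie in $\Gamma$. The hard part is to show that these translational elements, and then the whole of $V_0^\perp$, are isotropic; equivalently, to exclude longer unipotent Jordan blocks (e.g. a single $3\times1$ block in signature $(2,2)$, which is abelian yet not translationally isotropic, hence which cannot be the holonomy of a homogeneous $\DD$). I expect to close this by combining the openness identity $\im N_\gamma=\ell(\frz)q_\gamma$ with the skew-symmetry of $\ell(\frz)$ and the fact that $\Gamma$ acts properly, forcing $\langle\im N_\gamma,\im N_\delta\rangle=0$. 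This final isotropy step, which ties the metric directly to the transitive action of the centraliser, is the decisive and most delicate point of the argument.
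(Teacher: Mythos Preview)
Your reduction is exactly the paper's: show that the fixed space $V_0=\bigcap_\gamma\ker N_\gamma$ consists of translations in the centraliser (hence $V_0\subseteq T$), identify $V_0^\perp=\sum_\gamma\im N_\gamma$, and observe that translational isotropy follows once $V_0^\perp$ is totally isotropic. The paper packages this as Lemmas~3.1--3.3, and in your notation the whole theorem reduces to the single statement that $\sum_\gamma\im N_\gamma$ is totally isotropic, i.e.\ $N_\delta N_\gamma=0$ for all $\gamma,\delta\in\Gamma$.

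The gap is precisely there. You treat this as the ``decisive and most delicate point'' and attempt to extract it from the identity $\im N_\gamma=\ell(\frz)q_\gamma$ together with skew-symmetry of $\ell(\frz)$ and properness of the $\Gamma$-action, but you do not close the argument; ``I expect to close this'' is not a proof, and nothing you have written so far excludes, say, a $3\times3$ unipotent Jordan block. In the paper this step is not proved at all: it is \emph{cited}. Wolf's structure theory for flat homogeneous spaces (recorded here as Lemma~2.1 and Proposition~2.3) already gives that every $\gamma\in\Gamma$ has $N_\gamma^2=0$ with $\im N_\gamma$ totally isotropic, and that abelian linear holonomy is equivalent to $N_\gamma N_\delta=0$ for all pairs (equivalently, $U_\Gamma=\sum_\gamma\im N_\gamma$ totally isotropic). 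Once $N_\gamma^2=0$ is known for every element of $\Gamma$, the abelian hypothesis forces $N_\gamma N_\delta=0$ by applying the same 2-step nilpotency to the product $\gamma\delta$: the nilpotent part of $(I+N_\gamma)(I+N_\delta)$ is $N_\gamma+N_\delta+N_\gamma N_\delta$, and squaring it under $[N_\gamma,N_\delta]=0$ yields $2N_\gamma N_\delta$.

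So the fix is short: invoke Wolf's Lemma (not merely unipotency, but $N_\gamma^2=0$) and Proposition~2.3, and then your own first two paragraphs already finish the proof. Your third and fourth paragraphs, while containing the correct and pleasant identity $\im N_\gamma=\ell(\frz)q_\gamma$, are an attempt to reprove Wolf's background results by a different route; as written they are incomplete, and in any case unnecessary for the theorem at hand.
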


The proof is given in Section \ref{sec_proof}.
Using Theorem \ref{thm_index_bound} we conclude:

\begin{mcor}
Let $M=\DD/\Gamma$ be a geodesically incomplete
flat pseudo-Riemannian homogeneous manifold of metric signature
$(n-s,s)$ with $s\in\{0,1,2,3\}$.
Then $\DD$ is a translationally isotropic domain.
\end{mcor}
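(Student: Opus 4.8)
The plan is to obtain the corollary as a direct consequence of Theorem \ref{thm_abhol_transiso} together with Theorem \ref{thm_index_bound}, so that the only real work is to check that the hypotheses of Theorem \ref{thm_abhol_transiso} are satisfied by the manifold in the corollary. Recall from the structure theory summarised in the introduction (Wolf \cite{wolf1}, Goldman--Hirsch \cite{GH}) that a geodesically incomplete flat pseudo-Riemannian homogeneous manifold $M$ of signature $(n-s,s)$ is realised as $M=\DD/\Gamma$, where $\DD\subseteq\RR^n_s$ is the development image of $\tilde M$ and $\Gamma\subset\Iso(\DD)$ is the affine holonomy group. Two points then remain: first, that $\DD$ is an open orbit of the centraliser $\Zen_{\Iso(\RR^n_s)}(\Gamma)$; and second, that $M$ has abelian linear holonomy. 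Once both are established, Theorem \ref{thm_abhol_transiso} applies verbatim and yields that $\DD$ is translationally isotropic.

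For the first point I would exploit the homogeneity of $M$ directly. The isometries of $M$ lift to affine isometries of $\DD$ normalising $\Gamma$, and the argument of Wolf, as extended to the incomplete setting by Duncan and Ihrig, shows that homogeneity of $M$ is equivalent to the centraliser $\Zen_{\Iso(\RR^n_s)}(\Gamma)$ acting transitively on $\DD$ (the commuting isometries descend to $M$ and are exactly the ones realising transitivity). Hence $\DD$ is an open orbit of this centraliser, which is precisely the standing hypothesis of Theorem \ref{thm_abhol_transiso}. For the second point I would invoke Theorem \ref{thm_index_bound}: for metric index $s\in\{0,1,2,3\}$ the linear holonomy group of $M$ is abelian. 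Feeding both facts into Theorem \ref{thm_abhol_transiso} then gives the claim.

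The genuine mathematical content lies entirely inside the two cited theorems, so I do not expect a serious obstacle here; the corollary is a combination of them. The only steps requiring care are the passage from homogeneity of $M$ to the centraliser-orbit description of $\DD$, which relies on the cited structure theory rather than on anything new, and the observation that the case $s=0$ is vacuous, since a flat Riemannian homogeneous space is geodesically complete and therefore never arises in the incomplete situation considered here. With these remarks the hypotheses of Theorem \ref{thm_abhol_transiso} are met for every $s\le 3$, and translational isotropy of $\DD$ follows.
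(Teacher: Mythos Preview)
Your proposal is correct and follows exactly the paper's approach: the corollary is deduced immediately from Theorem~\ref{thm_abhol_transiso} once Theorem~\ref{thm_index_bound} guarantees that $\Hol(\Gamma)$ is abelian for $s\le 3$. The additional remarks you make about the centraliser-orbit description of $\DD$ and the vacuity of the case $s=0$ are fine elaborations, but the paper itself simply states ``Using Theorem~\ref{thm_index_bound} we conclude'' and moves on.
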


A major consequence of Theorem \ref{thm_abhol_transiso} for
Duncan and Ihrig's classification is now:

\begin{mcor}
Taking into account the correction in Remark \ref{rem_correction},
the classification of flat homogeneous manifolds of metric
signature $(n-2,2)$
with translationally isotropic domain given by Duncan and Ihrig
\cite[Theorem 3.1]{DI3} is in fact the full classification of flat
homogeneous manifolds of metric signature $(n-2,2)$.
\end{mcor}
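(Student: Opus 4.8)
The plan is to observe that the statement is essentially a bookkeeping consequence of the Corollary on geodesically incomplete manifolds together with the dichotomy between the complete and incomplete cases recalled in the introduction. Every flat pseudo-Riemannian homogeneous manifold $M$ of signature $(n-2,2)$ is, by the results of Goldman and Hirsch quoted above, of the form $M=\DD/\Gamma$: if $M$ is geodesically complete then $\DD=\RR^n_2$, and if $M$ is geodesically incomplete then $\DD$ is the development image, an open affine homogeneous domain in $\RR^n_2$. It therefore suffices to show in both cases that $\DD$ is translationally isotropic, for then $M$ falls within the class treated by Duncan and Ihrig in \cite[Theorem 3.1]{DI3}, and exhaustiveness of their list follows.

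First I would dispose of the geodesically incomplete case. Here the signature forces $s=2$, and since $2\in\{0,1,2,3\}$, the Corollary on geodesically incomplete manifolds applies verbatim and yields at once that $\DD$ is translationally isotropic. No separate argument is needed: the entire content of this case is already packaged in that corollary, which in turn rests on Theorem \ref{thm_abhol_transiso} and Theorem \ref{thm_index_bound}.

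For the geodesically complete case the domain is all of $\RR^n_2$. Then the set $T$ of translations preserving $\DD$ is the full translation group, so that $T^\perp=\{0\}\subseteq T$ and $\DD$ is translationally isotropic in the trivial way. Combining the two cases, every flat homogeneous manifold of signature $(n-2,2)$ has translationally isotropic development image, so Duncan and Ihrig's restricted classification is in fact complete. The only point requiring genuine care — and the reason for Remark \ref{rem_correction} — is that within their framework the complete case is misstated: one must replace their assertion that $\Gamma$ consists of pure translations by Wolf's correct description \cite[Theorem 3.6]{wolf2}, in which the generators of the free abelian group $\Gamma$ may carry nontrivial linear parts. With this correction inserted, the combined classification accounts for all flat homogeneous manifolds of signature $(n-2,2)$.
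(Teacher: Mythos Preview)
Your proposal is correct and matches the paper's (implicit) reasoning: the paper does not give a separate proof for this corollary, presenting it as a direct consequence of Theorem~\ref{thm_abhol_transiso} and the preceding corollary. Your explicit split into the geodesically complete case (where $\DD=\RR^n_2$ is trivially translationally isotropic) and the incomplete case (handled by the first corollary with $s=2$) is exactly the intended unpacking, and your handling of Remark~\ref{rem_correction} is appropriate.
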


\begin{remark}
For signatures with $s\geq 4$, non-abelian linear holonomy groups
can occur, as Example 6.2 with signature $(4,4)$
in Baues and Globke \cite{BG} shows.
In this particular example,
$\DD$ is translationally isotropic.
However, it is not clear if the assumption of abelian
linear holonomy in
Theorem \ref{thm_abhol_transiso} can be dropped.
\end{remark}

\section{Affine and linear holonomy groups}\label{sec_holonomy}

Let $M=\DD/\Gamma$ be a flat homogeneous pseudo-Riemannian space
of signature $(n-s,s)$.
The affine holonomy group $\Gamma$ consists of affine transformations of
$\RR^n_s$ leaving $\DD$ invariant, and the group
$\Hol(\Gamma)\subset\OO_{n-s,s}$ consisting of its linear
parts is called the \emph{linear holonomy group}.\footnote{The
linear holonomy group is what differential geometers simply
refer to as the \emph{holonomy group}: The group 
generated by parallel transports of tangent vectors around loops
based at a certain point in $M$, see also Wolf \cite[Lemma 3.4.4]{wolf}.}

For $M$ to be homogeneous, it is necessary that the
centraliser of $\Gamma$ in $\Iso(\DD)$ acts transitively
on $\DD$, or in other words, the centraliser of
$\Zen_{\Iso(\RR^n_s)}(\Gamma)$ has an open orbit
$\DD\subset\RR^n_s$ (see Wolf \cite[Theorem 2.4.17]{wolf}).
We recall some properties of groups $\Gamma$ with this property
(see Wolf \cite[Section 3.7]{wolf} for the original proofs,
adapted to incomplete spaces by Duncan and Ihrig
\cite[Section 4]{DI2}).

An affine transformation $g\in\Aff(\RR^n)$ is written as
$g=(A,v)$, where $A$ is the linear part of $g$ and $v$ is its
translation part. The identity matrix is denoted by $I$.
Let $\im A$ and $\ker A$ denote the image and the kernel of
a matrix $A$, respectively.
An element $v\in\RR^n_s$ is called \emph{isotropic} if
$\langle v,v\rangle=0$, and
a vector subspace $U\subset\RR^n_s$ is called
\emph{totally isotropic} if $\langle u,v\rangle=0$ for all
$u,v\in U$.

\begin{lem}[Wolf \cite{wolf1}]\label{lem_wolf}
Let $\gamma\in\Gamma$. Then $\gamma=(I+A,v)$, where
$A^2=0$, $Av=0$, $\im A$ is totally isotropic and
$v\perp\im A$.
Moreover, $\im A=(\ker A)^\perp$ and $\ker A=(\im A)^\perp$.
\end{lem}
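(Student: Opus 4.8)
The plan is to squeeze the entire structure of $\gamma=(I+A,v)$ out of two facts: that the linear part $I+A$ lies in $\OO_{n-s,s}$, and that the centraliser $\Zen_{\Iso(\RR^n_s)}(\Gamma)$ acts transitively on the open domain $\DD$. Expanding $\langle(I+A)x,(I+A)y\rangle=\langle x,y\rangle$ gives the orthogonality identity
\begin{equation*}
\langle Ax,y\rangle+\langle x,Ay\rangle+\langle Ax,Ay\rangle=0\qquad\text{for all }x,y\in\RR^n,
\end{equation*}
but this only records that $I+A$ is an isometry and is far too weak on its own; I do not expect to use it until the very end. The genuine content must come from transitivity of the centraliser.

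The key device I would introduce is the \emph{displacement map} $\xi(x)=\gamma(x)-x=Ax+v$, an affine map with linear part $A$. First I would note how it interacts with the centraliser: if $z=(B,w)$ centralises $\gamma$, then $\gamma z=z\gamma$ yields $\xi(z(x))=B\,\xi(x)$ for every $x$. Since the centraliser acts transitively, every point of $\DD$ is $z(p)$ for a fixed base point $p\in\DD$, so every displacement vector has the form $\xi(z(p))=B\,\xi(p)$ with $B\in\Hol(\Gamma)\subset\OO_{n-s,s}$. As each such $B$ preserves the form, I conclude that \emph{all} displacement vectors lie on the single pseudo-sphere $\{y:\langle y,y\rangle=\langle\xi(p),\xi(p)\rangle\}$.

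The heart of the argument, which I expect to be the main obstacle, is to play this against an openness count. On one hand $\xi(\DD)=\xi(p)+A(\DD-p)$ is an \emph{open} subset of the affine flat $\xi(p)+\im A$, because the surjection $A\colon\RR^n\to\im A$ is an open map. On the other hand it lies on the quadric above, so writing a general point as $\xi(p)+u$ with $u=A(x-p)$ ranging over the open set $A(\DD-p)\subseteq\im A$, the quadric condition reads $2\langle\xi(p),u\rangle+\langle u,u\rangle=0$ for all $u$ in an open subset of $\im A$. A polynomial vanishing on a nonempty open subset of $\im A$ vanishes identically, and separating homogeneous parts forces $\langle u,u\rangle=0$ and $\langle\xi(p),u\rangle=0$ for all $u\in\im A$. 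By polarisation the first statement says $\im A$ is totally isotropic, and the second says $\xi(p)\perp\im A$; since $Ap\in\im A$ is then isotropic against $\im A$, it follows that $v=\xi(p)-Ap\perp\im A$ as well.

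The remaining claims are then pure bookkeeping. Total isotropy of $\im A$ kills the term $\langle Ax,Ay\rangle$ in the orthogonality identity, reducing it to $\langle Ax,y\rangle+\langle x,Ay\rangle=0$; hence $A$ is skew-adjoint for $\langle\cdot,\cdot\rangle$, which gives $\ker A=(\im A)^\perp$ and, by nondegeneracy, $\im A=(\ker A)^\perp$. Total isotropy also means $\im A\subseteq(\im A)^\perp=\ker A$, which is precisely $A^2=0$; and $v\perp\im A$ means $v\in(\im A)^\perp=\ker A$, i.e. $Av=0$. Thus every assertion of the lemma follows once the openness-versus-quadric clash of the previous paragraph is established, and that clash is the only delicate step.
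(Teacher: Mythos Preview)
The paper does not actually prove this lemma; it is stated with attribution to Wolf \cite{wolf1} and the surrounding text refers the reader to \cite[Section 3.7]{wolf} for the original arguments and to Duncan and Ihrig \cite[Section 4]{DI2} for the adaptation to incomplete spaces. Your argument is in fact precisely the classical one: Wolf's proof also hinges on the observation that the displacement function $x\mapsto\langle\gamma x-x,\gamma x-x\rangle$ is constant on the domain because the centraliser of $\Gamma$ acts transitively by isometries, and then extracts everything from the fact that an open piece of the affine flat $v+\im A$ sits inside a single pseudo-sphere. So you have rediscovered the intended proof rather than found a different route.

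One small slip worth correcting: when you write ``$B\in\Hol(\Gamma)\subset\OO_{n-s,s}$'' this is a misidentification. The element $z=(B,w)$ lies in the centraliser $C=\Zen_{\Iso(\RR^n_s)}(\Gamma)$, not in $\Gamma$, so $B$ is a linear part of a centraliser element and in general has nothing to do with $\Hol(\Gamma)$. Fortunately you only use $B\in\OO_{n-s,s}$, which holds because $C\subset\Iso(\RR^n_s)$, so the logic is unaffected. With that label fixed, your proof is complete and matches the cited source.
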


We define a vector subspace of $\RR^n_s$,
\[
U_\Gamma=\sum_{(I+A,v)\in\Gamma} \im A,\quad
\text{with }
U_\Gamma^\perp=\bigcap_{(I+A,v)\in\Gamma} \ker A.
\]
Then the subspace
\begin{equation}
U_0 = U_\Gamma \cap U_\Gamma^\perp
\label{eq_U0}
\end{equation}
is totally isotropic.

\begin{prop}[Wolf \cite{wolf}]\label{prop_wolf_abelian}
The following are equivalent:
\begin{enumerate}
\item
$\Hol(\Gamma)$ is abelian.
\item
If $(I+A_1,v_1),(I+A_2,v_2)\in\Gamma$, then $A_1 A_2=0$.
\item
The space $U_\Gamma$
is totally isotropic.
\item
$U_0=U_\Gamma$.
\end{enumerate}
\end{prop}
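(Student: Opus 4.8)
The plan is to establish the three ``linear-algebra'' equivalences (2)$\Leftrightarrow$(3)$\Leftrightarrow$(4) first, since these are really just reformulations of one condition on the subspaces $\im A$, and then to treat the genuinely group-theoretic equivalence (1)$\Leftrightarrow$(2). The implication (2)$\Rightarrow$(1) is immediate: if $A_1A_2=0$ for every pair of elements, then by symmetry $A_2A_1=0$ as well, so the linear parts satisfy $(I+A_1)(I+A_2)=I+A_1+A_2=(I+A_2)(I+A_1)$ and $\Hol(\Gamma)$ is abelian. The crux of the whole statement is therefore the reverse implication (1)$\Rightarrow$(2).

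For (2)$\Leftrightarrow$(3) I would argue as follows. By Lemma \ref{lem_wolf} we have $\ker A_1=(\im A_1)^\perp$, so for any two elements $(I+A_1,v_1),(I+A_2,v_2)\in\Gamma$ the condition $A_1A_2=0$ is equivalent to $\im A_2\subseteq\ker A_1=(\im A_1)^\perp$, that is, to $\langle\im A_1,\im A_2\rangle=0$; note that this last relation is symmetric in the two indices. Since each $\im A$ is itself totally isotropic by Lemma \ref{lem_wolf}, the sum $U_\Gamma=\sum\im A$ is totally isotropic precisely when all of these cross-terms vanish, which is exactly condition (2). For (3)$\Leftrightarrow$(4) one uses that $U_0=U_\Gamma\cap U_\Gamma^\perp$ by definition, so $U_0=U_\Gamma$ holds if and only if $U_\Gamma\subseteq U_\Gamma^\perp$, and this inclusion says precisely that $U_\Gamma$ is totally isotropic.

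The key step is (1)$\Rightarrow$(2), and here I would exploit that $\Gamma$ is a \emph{group}. For $\gamma_i=(I+A_i,v_i)\in\Gamma$ the product $\gamma_1\gamma_2$ again lies in $\Gamma$ and has linear part $I+A_3$ with $A_3=A_1+A_2+A_1A_2$; applying Lemma \ref{lem_wolf} to $\gamma_1\gamma_2$ then gives $A_3^2=0$. Assuming (1), the linear parts commute, so $A_1A_2=A_2A_1=:B$. Using $A_1^2=A_2^2=0$ one checks the cancellations $A_1B=BA_1=A_2B=BA_2=B^2=0$, so that in the expansion of $A_3^2=(A_1+A_2+B)^2$ only the two cross-terms $A_1A_2+A_2A_1=2B$ survive. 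Hence $0=A_3^2=2A_1A_2$, and since we work over $\RR$ this forces $A_1A_2=0$, which is (2).

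The main obstacle is precisely this last computation: everything hinges on recognising that the product of two holonomy elements is again constrained by Lemma \ref{lem_wolf} (so its linear part squares to zero), and on the cancellations that collapse $A_3^2$ to $2A_1A_2$ under the commutativity hypothesis. The division by $2$ is harmless over $\RR$, so no characteristic issues arise, and the argument closes the cycle (1)$\Rightarrow$(2)$\Rightarrow$(1) together with the chain (2)$\Leftrightarrow$(3)$\Leftrightarrow$(4).
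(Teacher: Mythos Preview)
Your argument is correct. The equivalences (2)$\Leftrightarrow$(3)$\Leftrightarrow$(4) are handled cleanly via Lemma~\ref{lem_wolf}, and your proof of (1)$\Rightarrow$(2) is valid: the cancellations $A_iB=BA_i=B^2=0$ do follow from $A_1^2=A_2^2=0$ together with $A_1A_2=A_2A_1$, so that $A_3^2=2A_1A_2$ and the conclusion follows over~$\RR$.

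Note, however, that the paper does not supply its own proof of this proposition; it is quoted as a result of Wolf~\cite{wolf} and stated without argument. So there is no ``paper's proof'' to compare against here. Your write-up would serve as a self-contained verification of the cited result, and the strategy you use---applying the nilpotency constraint $A_3^2=0$ from Lemma~\ref{lem_wolf} to the product $\gamma_1\gamma_2$---is indeed the standard one (it is essentially how Wolf argues in \cite[Section~3.7]{wolf}).
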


The vector space $\RR^n_s$ decomposes as
\[
\RR^n_s = U_0\oplus W_0\oplus U_0^*,
\]
where $U_0^*$ is a dual
space to $U_0$ and $W_0$ is orthogonal to $U_0$ and $U_0^*$.
It was shown in Baues and Globke \cite[Theorem 4.4]{BG} that in
a basis subordinate to this decomposition, the linear part of
$\gamma=(I+A,v)\in\Gamma$ takes the form
\begin{equation}
A=\begin{pmatrix}
0 & -B^\top \tilde{I} & C \\
0 & 0 & B \\
0 & 0 & 0
\end{pmatrix},
\label{eq_matrix}
\end{equation}
where $C$ is skew-symmetric, and the columns of $B$ are isotropic
and mutually ortho\-gonal with respect to $\tilde{I}$, the matrix
representing the restriction of $\langle\cdot,\cdot\rangle$
to $W_0$.

\begin{thm}\label{thm_index_bound}
Let $\Gamma\subset\Iso(\RR^n_s)$
be a group acting on $\RR^n_s$ whose centraliser
in $\Iso(\RR^n_s)$ has an open orbit.
If $\Hol(\Gamma)$ is not abelian, then $s\geq 4$.
\end{thm}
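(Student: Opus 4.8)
The plan is to prove the contrapositive: assuming $s\le 3$, I will show that $\Hol(\Gamma)$ is abelian. By Proposition~\ref{prop_wolf_abelian} this is equivalent to showing that $U_\Gamma$ is totally isotropic, so I argue by contradiction and suppose it is not. I work throughout in the decomposition $\RR^n_s=U_0\oplus W_0\oplus U_0^*$, in which every linear part $I+A$ of an element of $\Gamma$ has the block form \eqref{eq_matrix}. A direct multiplication shows that the only possibly nonzero block of a product $A_1A_2$ is its upper right corner $-B_1^\top\tilde I B_2\colon U_0^*\to U_0$; by Proposition~\ref{prop_wolf_abelian}, $\Hol(\Gamma)$ is therefore non-abelian exactly when some two columns $b_1,b_2$ of some $B_1,B_2$ satisfy $\langle b_1,b_2\rangle\neq 0$.

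The first, linear-algebraic, step bounds two quantities from below. Since $\Hol(\Gamma)$ is non-abelian, some pair of linear parts fails to commute (otherwise Proposition~\ref{prop_wolf_abelian} would force $A_iA_j=0$ throughout), so I fix $\gamma_1,\gamma_2$ with $[A_1,A_2]\neq 0$; by the same computation the commutator has a single nonzero block, the corner $\Sigma=B_1^\top\tilde I B_2-B_2^\top\tilde I B_1\colon U_0^*\to U_0$. This $\Sigma$ is skew-symmetric, hence a nonzero $\Sigma$ has rank at least $2$, and $\im\Sigma\subseteq U_0$, so $\dim U_0\ge 2$. On the other hand, let $V$ denote the nondegenerate image of $U_\Gamma$ under the projection onto $W_0$; it is spanned by the isotropic columns of the matrices $B_\gamma$, hence indefinite, so its negative index is at least $1$. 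Since $\RR^n_s=(U_0\oplus U_0^*)\oplus W_0$ splits orthogonally into a hyperbolic part of index $\dim U_0$ and the part $W_0\supseteq V$, we obtain $s=\dim U_0+\mathrm{ind}(W_0)\ge\dim U_0+\mathrm{ind}(V)\ge 2+\mathrm{ind}(V)$. Thus $s\ge 4$ as soon as $\mathrm{ind}(V)\ge 2$, and it remains only to rule out $\mathrm{ind}(V)=1$.

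Excluding $\mathrm{ind}(V)=1$ is where the hypothesis on the centraliser is essential, and where I expect the main difficulty. Let $Z$ be the centraliser of $\Gamma$ in $\Iso(\RR^n_s)$; an affine map $(P,q)$ lies in $Z$ precisely when $P$ commutes with every $I+A$ and $(P-I)v_\gamma=A_\gamma q$ for all $\gamma$, and the pure translations in $Z$ are exactly those by vectors of $U_\Gamma^\perp=U_0\oplus V^{\perp}$ (orthogonal complement taken in $W_0$). For a basepoint $p$ of the open orbit the orbit tangent space $\{Sp+t\}$, with $(S,t)$ in the Lie algebra of $Z$, must be all of $\RR^n_s$; as the translations reach only $U_\Gamma^\perp$, the skew endomorphisms $S$ with $[S,A_\gamma]=0$ for every $\gamma$ must move $p$ across a complement of $U_\Gamma^\perp$, and in particular onto all of the ($\ge 2$)-dimensional space $U_0^*$. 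The plan is to expand the relations $[S,A_\gamma]=0$ in the block decomposition and read off that the blocks of $S$ landing in $U_0^*$ are governed by the $W_0$-data (the $B_\gamma$ and the $W_0$-block $S_{22}\in\sso(W_0)$); a single negative direction in $V$ should then leave too little room for these to surject onto the two-dimensional $U_0^*$ while remaining skew and commuting with every $A_\gamma$, forcing a second negative direction in $V$ and hence $\mathrm{ind}(V)\ge 2$. Carrying out this last implication is the heart of the matter: the purely algebraic steps already give $s\ge 3$, which suffices for the signature $(m,2)$ application ($s=2$), so the genuine work lies entirely in invoking the open orbit to eliminate the borderline case $s=3$.
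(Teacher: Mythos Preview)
Your argument is incomplete, and you say so yourself: the decisive step---excluding $\mathrm{ind}(V)=1$---is left as a plan (``should then leave too little room\ldots''), not a proof. As it stands you have established only $s\ge 3$. The sketched mechanism (expanding $[S,A_\gamma]=0$ in blocks and arguing that one negative direction in $V$ cannot support a surjection onto a two-dimensional $U_0^*$) is not obviously workable; in particular you have not identified which block relation forces a second isotropic direction in $W_0$, and the centraliser Lie algebra has more freedom than your heuristic suggests (the blocks $S_{13},S_{23}$ mapping into $U_0^*$ are constrained by the $C$'s and $B$'s of \emph{all} $\gamma$, and extracting a dimension obstruction from this system is exactly the nontrivial content). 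A minor additional issue: your $V$, the span of all $B$-columns in $W_0$, need not be nondegenerate, so ``$\mathrm{ind}(V)$'' should be replaced by the index of $W_0$ or of a nondegenerate hull.

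The paper bypasses this entirely by invoking an external result, ``rule~(1)'' of Baues--Globke \cite[Section~5]{BG}. Under the open-orbit hypothesis that rule already produces two linearly independent, mutually orthogonal, isotropic columns $b_1^1,b_1^2$ of a \emph{single} block $B_1$ (together with two more columns of $B_2$ pairing nontrivially with them). These span a $2$-dimensional totally isotropic $W'\subset W_0$, and since $B_1$ has only $\dim U_0$ columns one simultaneously gets $\dim U_0\ge\rk B_1\ge 2$. Then $W'\oplus U_0$ is totally isotropic of dimension $\ge 4$, giving $s\ge 4$ in one line. So the open-orbit hypothesis is used, but the hard work is outsourced to \cite{BG}; what you are attempting is essentially to reprove (a consequence of) that rule from scratch. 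Your skew-commutator route to $\dim U_0\ge 2$ is correct and is a pleasant self-contained alternative to the paper's $\rk B_1\ge 2$ argument, but for the $W_0$ contribution you either need to carry out the centraliser analysis in full or cite the result that does.
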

A weaker version of this theorem was already proved
in \cite[Theorem 5.1]{BG}, where the statement ``$s\geq 4$''
is replaced by ``$n\geq 8$''.
\begin{proof}
If $\Hol(\Gamma)$ is not abelian, then there exist elements
$(I+A_1,v_1),(I+A_2,v_2)\in\Gamma$ such that
$A_1A_2\neq0$ (Proposition \ref{prop_wolf_abelian}).
This means the respective blocks $B_1,B_2$ in (\ref{eq_matrix})
are not $0$.

According to rule (1) in Baues and Globke \cite[Section 5]{BG},
the columns of
$B_1$ and $B_2$ contain at least four linearly independent
isotropic vectors $b_1^1, b_1^2, b_2^1, b_2^2\in W_0$
satisfying $\langle b_1^i, b_1^j\rangle_{W_0}=0$ and
$\langle b_1^i,b_2^j\rangle_{W_0}\neq0$ for $i\neq j$.
In particular, $b_1^1$ and $b_1^2$ span a 2-dimensional
totally isotropic subspace $W'\subset W_0$.

Moreover, this implies that $\rk B_1=\rk B_1^\top\geq 2$, so that
$\dim U_0\geq 2$. As $W'\perp U_0$, the space
$W'\oplus U_0$ is a totally isotropic subspace of
dimension $\geq 4$. But then $s\geq 4$ holds, because $s$
is an upper bound for the dimension of totally isotropic
subspaces.
\end{proof}

%
%
%

\section{Proof of the main result}\label{sec_proof}

Let $\DD\subset\RR^n_s$ be an open domain. Its isometry group
is
\[
\Iso(\DD) = \{ g\in\Iso(\RR^n_s)\mid g.\DD\subseteq\DD\},
\]
and by
$T=\Iso(\DD)\cap\RR^n_s$
we denote the set of translations
leaving $\DD$ invariant, $T+\DD=\DD$.
If $T^\perp\subset T$, then we call $\DD$
\emph{translationally isotropic}.

\begin{lem} \label{lem_U0_in_D}
Let $U$ be a totally isotropic subspace in $\RR^n_s$.
If $U^\perp\subseteq T$, then $\DD$ is translationally isotropic.
\end{lem}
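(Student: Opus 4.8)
The plan is to derive the conclusion purely from the order-reversing and involutive behaviour of orthogonal complementation with respect to the nondegenerate pseudo-scalar product on $\RR^n_s$. The key observation is that total isotropy of $U$ means precisely $U\subseteq U^\perp$, and that the hypothesis $U^\perp\subseteq T$ can be turned around by taking perpendiculars.

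First I would record the two standard facts about a nondegenerate bilinear form: complementation reverses inclusions, so $V_1\subseteq V_2$ implies $V_2^\perp\subseteq V_1^\perp$, and it is an involution, so $(V^\perp)^\perp=V$ for every subspace $V$. Applying the first fact to the hypothesis $U^\perp\subseteq T$ and then the second gives $T^\perp\subseteq(U^\perp)^\perp=U$. Combining this with the total isotropy $U\subseteq U^\perp$ and the hypothesis $U^\perp\subseteq T$ yields the chain
\[
T^\perp\subseteq U\subseteq U^\perp\subseteq T,
\]
whence $T^\perp\subseteq T$, which is exactly the definition of $\DD$ being translationally isotropic.

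The argument involves no computation, so the only point that needs care is that $T^\perp$ be meaningful, i.e.\ that $T$ is a genuine vector subspace of $\RR^n_s$; this is the case because the translations preserving the open set $\DD$ form a linear subspace. I would also stress that nondegeneracy of $\langle\cdot,\cdot\rangle$ is essential: it is what makes $(U^\perp)^\perp=U$ hold and thus lets the single hypothesis $U^\perp\subseteq T$ propagate to both ends of the chain. The main ``obstacle'' is therefore conceptual rather than technical—recognising that total isotropy of $U$ is precisely the link bridging the deduced inclusion $T^\perp\subseteq U$ with the given inclusion $U^\perp\subseteq T$.
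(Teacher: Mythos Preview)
Your argument is correct and is essentially the paper's proof recast in the language of inclusion--reversal: the paper argues contrapositively that $v\notin T$ forces $v\notin U^\perp$, hence $v\not\perp U\subset T$, hence $v\notin T^\perp$, which unwinds to exactly your chain $T^\perp\subseteq U\subseteq U^\perp\subseteq T$. One minor remark: neither of your two caveats is actually needed, since $T^\perp$ is well-defined for any subset $T$ (and $T$ need not be a linear subspace for an arbitrary open domain~$\DD$), and from $U\subseteq U^\perp\subseteq T$ one already gets $T^\perp\subseteq U^\perp\subseteq T$ without invoking $(U^\perp)^\perp=U$, so nondegeneracy plays no essential role here.
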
 
\begin{proof}
Assume $U^\bot\subseteq T$. If some vector $v$
satisfies $v+\DD\not\subset \DD$, then $v\not\in U^\bot$.
But then $v\not\perp U\subset T$.
In particular, $v\not\in T^\perp$.
So any vector in $u\in T^\perp$ satisfies $u+\DD\subseteq\DD$,
which means that $\DD$ is translationally isotropic.
\end{proof}

In the following, we will assume $\DD$ is the development
image for a flat pseudo-Riemannian homogeneous space.
So let $\Gamma\subseteq\Iso(\RR^n_s)$ be a
discrete group acting freely and properly discontinuously on
$\RR^n_s$, and
$\DD\subset\RR^n_s$ is an open orbit of its centraliser $C =\Zen_{\Iso(\RR^n_s)}(\Gamma)$
(all this holds if $\Gamma$ is the affine holonomy group of
$M=\DD/\Gamma$).
Let $T$ be the set of translations in $\RR^n_s$ satisfying
$T+\DD=\DD$, and let
$U_\Gamma$ and $U_0$ be the subspaces of $\RR^n_s$
defined in Section \ref{sec_holonomy}.

We prove that if $\Gamma$ has abelian linear holonomy,
then $\DD$ is
translationally isotropic.

%

\begin{lem} \label{lem_U0_translations}
Identify $U_0$ with the group of translations by vectors in $U_0$.
Then $U_0\subset C \cap T$.
\end{lem}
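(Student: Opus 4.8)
The plan is to prove the two containments $U_0\subset C$ and $U_0\subset T$ separately, relying only on the definition $U_0=U_\Gamma\cap U_\Gamma^\perp$ together with the kernel description of $U_\Gamma^\perp$ from Section \ref{sec_holonomy}.

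First I would fix $u\in U_0$ and show that the translation $\tau_u\colon x\mapsto x+u$ (that is, the element $(I,u)$) lies in the centraliser $C$. For any $\gamma=(I+A,v)\in\Gamma$, comparing the two compositions gives $(\gamma\circ\tau_u)(x)=(I+A)x+(I+A)u+v$ and $(\tau_u\circ\gamma)(x)=(I+A)x+v+u$, so $\gamma$ and $\tau_u$ commute precisely when $(I+A)u=u$, i.e.\ when $Au=0$. Since $u\in U_0\subseteq U_\Gamma^\perp=\bigcap_{(I+A,v)\in\Gamma}\ker A$, we have $Au=0$ for every element of $\Gamma$; hence $\tau_u$ commutes with all of $\Gamma$, and therefore $\tau_u\in C$.

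Second, I would deduce invariance of $\DD$ from the fact that $\DD$ is an orbit of $C$. Writing $\DD=C.p$ for some $p\in\DD$, and using that $\tau_u\in C$ together with the group structure of $C$, left multiplication by $\tau_u$ permutes $C$; hence $\tau_u(\DD)=\{(\tau_u g).p\mid g\in C\}=C.p=\DD$. This yields $u+\DD=\DD$, i.e.\ $u\in T$, which completes the argument.

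I do not anticipate a genuine obstacle: the computation reducing centralisation to the condition $Au=0$ is routine, and the invariance of $\DD$ is a formal consequence of its being a $C$-orbit once the first step is in hand. The only point requiring care is to invoke the correct inclusion $U_0\subseteq U_\Gamma^\perp$ rather than $U_0\subseteq U_\Gamma$, since it is the kernel condition, and not the image condition, that produces commutation with $\Gamma$.
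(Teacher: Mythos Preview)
Your proposal is correct and follows essentially the same route as the paper: the commutation of $(I,u)$ with every $\gamma=(I+A,v)\in\Gamma$ is reduced to the condition $Au=0$, which holds by $U_0\subseteq U_\Gamma^\perp=\bigcap_A\ker A$, and invariance of $\DD$ is then deduced from the fact that $\DD$ is a $C$-orbit. The paper compresses the second step into a single clause, but the argument is identical.
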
 
\begin{proof}
A translation by $u\in U_0$ is represented by $(I,u)$. If $(I+A,v)\in\Gamma$,
then
\[
(I+A,v)(I,u)=(I+A,u+Au+v)=(I+A,u+v)=(I,u)(I+A,v),
\]
where we used the fact that $U_0\subset\ker A$ by definition.
So $(I,u)\in C $ and thus $(I,u)$ is a translation leaving $ C $-orbits
invariant, implying $(I,u)\in C \cap T$.
\end{proof}

\begin{lem}\label{lem_abhol_transiso}
$\Hol(\Gamma)$ is abelian if and only if
$U_0^\bot\subseteq C $.
If this holds,
then $\DD$ is translationally isotropic.
\end{lem}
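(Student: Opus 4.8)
The plan is to reduce the whole statement to the single computation already carried out in the proof of Lemma \ref{lem_U0_translations}: a translation $(I,w)$ commutes with $\gamma=(I+A,v)\in\Gamma$ if and only if $Aw=0$, that is, $w\in\ker A$. Identifying a subspace $V\subseteq\RR^n_s$ with its group of translations, this says $V\subseteq C$ exactly when $V\subseteq\bigcap_{(I+A,v)\in\Gamma}\ker A=U_\Gamma^\perp$. Before running this, I would record the two linear-algebra facts needed throughout. First, $U_\Gamma^\perp=\bigcap\ker A$, which follows from $\ker A=(\im A)^\perp$ in Lemma \ref{lem_wolf}. Second, since $U_0=U_\Gamma\cap U_\Gamma^\perp$ by (\ref{eq_U0}) and the form is non-degenerate, the dual identity $U_0^\perp=U_\Gamma+U_\Gamma^\perp$ holds; in particular $U_\Gamma\subseteq U_0^\perp$.

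For the forward implication of the equivalence, suppose $\Hol(\Gamma)$ is abelian. Then Proposition \ref{prop_wolf_abelian} gives $U_0=U_\Gamma$, hence $U_0^\perp=U_\Gamma^\perp=\bigcap\ker A$, so every translation by a vector of $U_0^\perp$ lies in $\ker A$ for each $\gamma$ and thus commutes with all of $\Gamma$, i.e.\ lies in $C$. For the converse, assume $U_0^\perp\subseteq C$. Since $U_\Gamma\subseteq U_0^\perp$, every translation by $w\in U_\Gamma$ lies in $C$, which by the commuting computation forces $w\in\bigcap\ker A=U_\Gamma^\perp$. Therefore $U_\Gamma\subseteq U_\Gamma^\perp$, i.e.\ $U_\Gamma$ is totally isotropic, and Proposition \ref{prop_wolf_abelian} again yields that $\Hol(\Gamma)$ is abelian.

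For the second assertion I would apply Lemma \ref{lem_U0_in_D} with $U=U_0$, which is totally isotropic by (\ref{eq_U0}); it remains to verify the hypothesis $U_0^\perp\subseteq T$. Because $\DD$ is an orbit of $C$, every $g\in C$ satisfies $g.\DD=\DD$, and applied to a translation $(I,w)\in C$ this gives $w+\DD=\DD$, so $(I,w)\in T$. Hence $U_0^\perp\subseteq C$ implies $U_0^\perp\subseteq T$, and Lemma \ref{lem_U0_in_D} delivers that $\DD$ is translationally isotropic.

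The individual steps are short, and the genuine work has already been done in the preceding lemmas; the point requiring care is the converse direction, whose conclusion rests on the inclusion $U_\Gamma\subseteq U_0^\perp$ rather than on $U_0$ itself, so I would derive $U_0^\perp=U_\Gamma+U_\Gamma^\perp$ cleanly from non-degeneracy before invoking the commuting argument. The passage from $C$ to $T$ also deserves an explicit remark, since $T$ is defined by the one-sided condition $w+\DD\subseteq\DD$, whereas membership in $C$ supplies the stronger equality $w+\DD=\DD$.
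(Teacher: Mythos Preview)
Your proposal is correct and follows essentially the same approach as the paper: both reduce the equivalence to the commutator computation showing that a translation $(I,u)$ centralises $\Gamma$ iff $u\in U_\Gamma^\perp$, and then invoke Proposition~\ref{prop_wolf_abelian}. The only cosmetic difference is that the paper treats both directions at once via the chain $U_0^\perp\subseteq C\Leftrightarrow U_0^\perp\subseteq U_\Gamma^\perp\Leftrightarrow U_0^\perp=U_\Gamma^\perp$ (using $U_\Gamma^\perp\subseteq U_0^\perp$), whereas you split the implications and route the converse through $U_\Gamma\subseteq U_0^\perp$; your explicit remark on passing from $C$ to $T$ before applying Lemma~\ref{lem_U0_in_D} is a point the paper leaves implicit.
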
 
\begin{proof}
Let $u\in U_0^\bot$. Then
\[
(I+A,v)(I,u)=(I+A,u+Au+v)=(I+A,u+v)=(I,u)(I+A,v)
\]
for all $(I+A,v)\in\Gamma$ if and only if $Au=0$ for all $(I+A,v)\in\Gamma$.
But this is equivalent to
\[
u\in\bigcap_A \ker A=U_{\Gamma}^\bot\subset U_0^\bot.
\]
As $u$ is arbitrary, this means $U_\Gamma^\perp=U_0^\perp$,
which again is equivalent to the linear holonomy of $\Gamma$ being abelian
by part (4) of Proposition \ref{prop_wolf_abelian}.
In this case, $\DD$ is translationally isotropic by Lemma \ref{lem_U0_in_D}.
\end{proof}

This concludes the proof of Theorem \ref{thm_abhol_transiso}.

%


\end{document}